\newtheorem{Thm}{Theorem}
\newtheorem{Prop}[Thm]{Proposition}
\newtheorem{Cor}[Thm]{Corollary}
\DeclareMathOperator{\Ker}{Ker}
\DeclareMathOperator{\End}{End}
\DeclareMathOperator{\rad}{rad}
\DeclareMathOperator{\bM}{\mathbb M}
\begin{document}

\title{On Lenagan's Theorem for finite length bimodules}
\author{Andrew Hubery}
\address{Bielefeld University\\33501 Bielefeld\\Germany}
\email{hubery@math.uni-bielefeld.de}
\begin{abstract}
We offer a self-contained proof of Lenagan's Theorem which does not rely on Goldie's Theorem.
\end{abstract}
\subjclass[2010]{Primary 16P20; Secondary 16D20}

\maketitle

\section{Introduction}

Lenagan's Theorem states that for a bimodule ${}_\Gamma M_\Lambda$, if ${}_\Gamma M$ has finite length and $M_\Lambda$ is Noetherian, then $M_\Lambda$ also has finite length. This theorem first appeared in \cite{Lenagan}, and as Lam writes
\begin{quotation}
Indeed, although the argument above is quite short, it seemed to have used the full force of Goldie's First Theorem, and it is not clear at all how one could have proved [it] otherwise. \cite[p. 333]{Lam}
\end{quotation}
One important consequence is that the left and right Artin radicals of a Noetherian ring agree. We can also regard Lenagan's Theorem as a generalisation of the classical result that a left Artinian ring is right Artinian if and only if it is right Noetherian. This latter is a direct consequence of the Hopkins-Levitzki Theorem.

In fact, Lenagan's Theorem was generalised by Crawley-Boevey \cite{Crawley-Boevey}:  if ${}_\Gamma M$ is Artinian and $M_\Lambda$ is Noetherian, then ${}_\Gamma M$ and $M_\Lambda$ both have finite length. We give a new proof of this result without recourse to Goldie's Theorem. Instead it follows from a strengthening of the Hopkins--Levitzki Theorem. In a similar way we also obtain a result of Bj\"ork on subrings of semiprimary rings. The starting point for our proof is a result of Camps and Dicks on semilocal rings.

Let $\Lambda$ be a ring, with Jacobson radical $\rad(\Lambda)$. We call $\Lambda$ semilocal if $\bar\Lambda:=\Lambda/\rad(\Lambda)$ is semisimple, and semiprimary if moreover $\rad(\Lambda)$ is nilpotent. It is well-known that the endomorphism ring of a finite length object in an abelian category is always semiprimary.

\section{Finite length modules}

We begin with a beautiful proof, due to Camps and Dicks \cite{Camps-Dicks}. We follow the proof in the second edition \cite{Camps-Dicks2}, which incorporates ideas from Camillo and Nielsen \cite{Camillo-Nielsen}.

\begin{Thm}
Let $M$ be an Artinian object in some abelian category, and set $E:=\End(M)$. If $\Lambda\subset E$ is a subring such that $\Lambda^\times=\Lambda\cap E^\times$, then $\Lambda$ is semilocal.
\end{Thm}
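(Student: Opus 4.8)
The plan is to reduce the statement to a finiteness condition on $\Lambda$ and then to read that condition off from the Artinian structure of $M$. Recall that $\Lambda$ is semilocal precisely when $\bar\Lambda=\Lambda/\rad(\Lambda)$ has finite length over itself, which is the standard reformulation "$\bar\Lambda$ semisimple $\Leftrightarrow$ $\bar\Lambda$ Artinian"; equivalently, $\Lambda$ has finite dual Goldie dimension, the supremum of the sizes of co-independent families of proper right ideals. So the whole task is to exhibit a single integer bounding such configurations, using only the embedding $\Lambda\subset E$ and the unit condition $\Lambda^\times=\Lambda\cap E^\times$.

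First I would extract the one consequence of Artinianness that makes the unit condition usable, namely a Fitting-type dichotomy. If $a\in E$ is a monomorphism then so is every power $a^n$, and the descending chain $\Ima(a)\supseteq\Ima(a^2)\supseteq\cdots$ stabilises by the Artinian hypothesis; a short diagram chase then forces $a$ to be an epimorphism as well, hence an isomorphism (in an abelian category mono $+$ epi $\Rightarrow$ iso). Thus $E^\times=\{a\in E:\Ker(a)=0\}$, and the hypothesis $\Lambda^\times=\Lambda\cap E^\times$ says that an element of $\Lambda$ is a unit exactly when it is a monomorphism of $M$; in particular every non-unit of $\Lambda$ carries a nonzero kernel $\Ker(a)\subseteq M$. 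Since $M$ is Artinian, both the lengths of strictly descending chains of subobjects and the sizes of independent families of nonzero subobjects are bounded (an infinite independent family would produce an infinite descending chain). This is the finiteness in $M$ that I will play the algebra of $\Lambda$ against.

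The heart of the proof — and the step I expect to be the main obstacle — is to convert an algebraic configuration in $\Lambda$ witnessing large dual Goldie dimension into a geometric configuration in $M$ that the descending chain condition forbids. The naive device of splitting $M$ along idempotents is unavailable: $\Lambda$ is merely a subring of $E$, so it need not contain the idempotents of $E$, and idempotents cannot be lifted through $\rad(\Lambda)$ without extra hypotheses. Overcoming this is exactly the Camps--Dicks (and Camillo--Nielsen) contribution. The idea is to work idempotent-free, using the equivalence "unit $\Leftrightarrow$ monomorphism" to attach nonzero kernels in $M$ to suitable non-units of $\Lambda$, and the modularity of the subobject lattice to arrange that a co-independent family of proper ideals of length $n$ forces either an independent family of $n$ nonzero subobjects or a strictly descending chain of length $n$ in $M$. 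As these configurations are bounded by a fixed integer, the dual Goldie dimension of $\Lambda$ is finite and $\Lambda$ is semilocal. The delicate point is engineering the passage from ideals of $\Lambda$ to subobjects of $M$ so that nonvanishing and independence are guaranteed \emph{simultaneously}, without ever splitting $M$.
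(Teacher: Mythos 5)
Your preliminary reductions are sound: the Fitting-type dichotomy (units of $E$ are exactly the monomorphisms, via stabilisation of $\Ima(a^n)$) is correct and is indeed used in the paper, and an Artinian object does have finite uniform dimension. But the proposal stops exactly where the theorem begins. The step you yourself flag as ``the delicate point'' --- converting a co-independent family of $n$ proper right ideals of $\Lambda$ into $n$ independent nonzero subobjects of $M$ --- \emph{is} the Camps--Dicks theorem; no mechanism for it is offered, and ``modularity of the subobject lattice'' is a hope, not an argument: a proper right ideal is not attached to any particular non-unit, let alone to a kernel, and nothing explains how co-independence of ideals becomes independence of kernels. Note also that your reduction ``semilocal $\Leftrightarrow$ finite dual Goldie dimension'' is itself a nontrivial theorem (essentially Theorem~1 of \cite{Camps-Dicks}) that must be proved or cited; in the original 1993 route through it, what actually closes the gap is the superadditive function $d(a)=\operatorname{udim}\Ker(a)$, which satisfies $d(a-aba)\geq d(a)+d(1-ab)$ because of the identity $\Ker(x-xyx)=\Ker(x)\oplus\Ker(1-xy)$, together with $d(a)=0\Rightarrow a\in\Lambda^\times$. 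This identity, the engine of every known proof, appears nowhere in your outline.

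There is also an outright error which disables your proposed dichotomy: an Artinian object does \emph{not} have bounded lengths of finite strictly descending chains --- a uniform bound on such chains is equivalent to finite length. For instance the Pr\"ufer group $\mathbb{Z}(p^\infty)$ is an Artinian $\mathbb{Z}$-module admitting strictly descending chains of every finite length. So of your two ``forbidden'' configurations, only the independent family is actually forbidden (in bounded size); the ``descending chain of length $n$'' horn yields nothing, and any completed argument must run entirely through independence. The paper's proof (following the second edition of Camps--Dicks and Camillo--Nielsen) organises this quite differently and bypasses dual Goldie dimension altogether: fix a maximal right ideal $I\leq\Lambda$, use the ACC on direct summands (which your independence observation provides) to choose $x\notin I$ with $\Ker(x)$ maximal with respect to direct summands among $\{\Ker(y):y\notin I\}$, and then the identity $\Ker(x-xyzx)=\Ker(x)\oplus\Ker(1-xyz)$ plus your unit-equals-mono dichotomy forces $1-xyz$ to be invertible in $\Lambda$ whenever $xy\in I$ and $z\in\Lambda$; hence $x\Lambda\cap I\subseteq\rad(\Lambda)$, so $\bar\Lambda=\bar I\oplus x\bar\Lambda$ with $x\bar\Lambda$ simple, and since $I$ was arbitrary the socle of $\bar\Lambda$ lies in no maximal right ideal, forcing $\bar\Lambda$ to be semisimple. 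You should either carry out that argument or supply the superadditivity machinery; as it stands the proposal identifies the difficulty but does not resolve it.
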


\begin{proof}
Observe first that for all $x,y\in E$ we have
\[ \Ker(x-xyx) = \Ker(x) \oplus \Ker(1-xy), \]
induced by the idempotent endomorphism $xy$ on $\Ker(x-xyx)$. Also, $M$ satisfies the ascending chain condition with respect to direct summands: given $U_i\leq M$ with $U_i=U_{i-1}\oplus V_i$, the $V_i$ are eventually all zero. For, given such a sequence, we can construct the descending chain of submodules $\bigoplus_{i>n}V_i$.

Now let $I\leq\Lambda$ be a maximal right ideal. Take $x\not\in I$ such that $\Ker(x)$ is maximal, with respect to direct summands, in $\{\Ker(y):y\not\in I\}$. We claim that $x\Lambda\cap I\subset\rad(\Lambda)$, so suppose $xy\in I$. Then for all $z\in\Lambda$ we have $\Ker(x-xyzx)=\Ker(x)\oplus\Ker(1-xyz)$. Since $x-xyz\not\in I$, we see that $1-xyz$ is injective. By Fitting's Lemma it is invertible in $E$, so is invertible in $\Lambda$ by assumption.

It follows that $\bar\Lambda$ equals $\bar I\oplus x\bar\Lambda$. In particular, $x\bar\Lambda$ is simple, so the socle of $\bar\Lambda$ is not contained in any maximal right ideal, and hence $\bar\Lambda$ is semisimple.
\end{proof}

The following proposition is a strengthening of the Hopkins--Levitzki Theorem, which deals with the case $\Lambda=E$.

\begin{Prop}
Let $\Lambda$ subring of semiprimary ring $E$, and $M$ a right $E$-module. Then $M_\Lambda$ is Noetherian if and only if it is Artinian.
\end{Prop}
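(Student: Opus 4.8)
The plan is to use the nilpotency of $\rad(E)$ to reduce, via the radical filtration of $E$, to a single faithful simple module over a simple Artinian ring, and then to transfer the chain condition from the module to the subring $\Lambda$ itself: the point will be that $\Lambda$ becomes \emph{semiprimary}, after which the classical Hopkins--Levitzki theorem (the case $\Lambda=E$) finishes the argument.

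First I would filter $M$ by the $E$-submodules $M\supseteq M\rad(E)\supseteq\cdots\supseteq M\rad(E)^\ell=0$, a finite filtration since $\rad(E)$ is nilpotent. These are a fortiori $\Lambda$-submodules, and $M_\Lambda$ is Noetherian (resp.\ Artinian) if and only if every factor is. Each factor is a module over the semisimple ring $\bar E:=E/\rad(E)$, on which $\Lambda$ acts through its image $\bar\Lambda\subseteq\bar E$, so it suffices to treat such a factor $N$. Either chain condition on $N_{\bar\Lambda}$ forces $N$ to have finite length over $\bar E$ (the Noetherian case gives finite generation over $\bar E$; an $\bar E$-module of infinite length has an infinite descending chain of $\bar E$-, hence $\bar\Lambda$-, submodules). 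Passing to an $\bar E$-composition series, I reduce to the case of a faithful simple module $S$ over one simple Artinian factor $A$. Writing $D:=\End_A(S)$, we have $A=\End_D(S)$ with $n:=\dim_D S<\infty$, and the $\Lambda$-action factors through a unital subring $\Lambda'\subseteq A$; it then suffices to show $S_{\Lambda'}$ is Noetherian iff Artinian iff of finite length.

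The key observation is that, as a left $A$-module, $A\cong S^{\oplus n}$, so restriction along $\Lambda'\subseteq A$ gives ${}_{\Lambda'}A\cong S^{\oplus n}$; thus any chain condition on $S_{\Lambda'}$ passes to ${}_{\Lambda'}A$ and hence to its submodule ${}_{\Lambda'}\Lambda'$. If $S_{\Lambda'}$ is Artinian then $\Lambda'$ is a left Artinian ring, hence semiprimary, and no further input is needed. If $S_{\Lambda'}$ is Noetherian then $\Lambda'$ is left Noetherian; moreover for any $a\in\Lambda'\cap A^\times$ the ascending chain of left $\Lambda'$-submodules $\Lambda'\subseteq\Lambda' a^{-1}\subseteq\Lambda' a^{-2}\subseteq\cdots$ of $A$ must stabilise, forcing $a^{-1}\in\Lambda'$, so that $\Lambda'^\times=\Lambda'\cap A^\times$; the preceding theorem of Camps and Dicks, applied to the finite length object $S$ in the category of $D$-modules, then shows $\Lambda'$ is semilocal. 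For nilpotency of $\rad(\Lambda')$, the descending chain of $D$-subspaces $(\rad\Lambda')^iS$ stabilises at some $T=(\rad\Lambda')^NS$ with $(\rad\Lambda')T=T$; since $T$ is finitely generated over the Noetherian ring $\Lambda'$, Nakayama gives $T=0$, and faithfulness of $S$ over $A$ yields $(\rad\Lambda')^N=0$. Either way $\Lambda'$ is semiprimary.

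Finally, once $\Lambda'$ is semiprimary, filtering $S$ by the powers $(\rad\Lambda')^iS$ reduces to modules over the semisimple ring $\Lambda'/\rad(\Lambda')$, for which Noetherian, Artinian and finite length coincide; this is exactly the classical Hopkins--Levitzki theorem and closes the argument. I expect the main obstacle to be precisely this passage from a chain condition on the module $S$ to semiprimarity of $\Lambda'$: the Artinian side falls out immediately from $A\cong S^{\oplus n}$, whereas the Noetherian side genuinely requires two separate ingredients --- the unit condition $\Lambda'^\times=\Lambda'\cap A^\times$ (to invoke the Camps--Dicks theorem for semilocality) together with a Nakayama argument on the $D$-finite-dimensional module $S$ (for nilpotency of the radical).
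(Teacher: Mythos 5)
Your proposal follows the paper's proof essentially step for step: the radical filtration of $E$ reduces to a faithful simple module $S$ over a simple Artinian factor $A$; the isomorphism $A\cong S^{\oplus n}$ transfers the chain condition from $S_{\Lambda'}$ to $\Lambda'$ itself; the Artinian branch is immediate (right Artinian implies semiprimary); and in the Noetherian branch a stabilising chain argument gives $\Lambda'^\times=\Lambda'\cap A^\times$, Camps--Dicks gives semilocality, and Nakayama together with $\dim_D S<\infty$ gives nilpotency of $\rad(\Lambda')$ --- exactly the paper's route (the paper runs the Nakayama step on $E\cdot\rad^n(\Lambda)$ rather than on $S\cdot\rad(\Lambda')^n$ with faithfulness, an immaterial difference), finishing with the classical Hopkins--Levitzki case $\Lambda=E$.

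However, you need to correct a systematic left/right transposition in the middle of the argument. Since $M$ is a right $E$-module, $S$ is a right $A$-module, so the relevant decomposition is $A_A\cong S^{\oplus n}$ as \emph{right} $A$-modules; your assertion ``as a left $A$-module, $A\cong S^{\oplus n}$'' is not even well formed ($S$ carries no left $A$-action), and the ensuing claim that a chain condition on the right module $S_{\Lambda'}$ passes to the left module ${}_{\Lambda'}A$ is invalid in general, since left and right chain conditions on a ring are independent. The repair is mechanical: restrict $A_A\cong S^{\oplus n}$ to $\Lambda'$ on the right to get $A_{\Lambda'}\cong S_{\Lambda'}^{\oplus n}$, so that $\Lambda'$ is \emph{right} Noetherian resp.\ \emph{right} Artinian; replace the unit chain by $\Lambda'\subseteq a^{-1}\Lambda'\subseteq a^{-2}\Lambda'\subseteq\cdots$, a chain of right $\Lambda'$-submodules of $A$ whose stabilisation again yields $a^{-1}\in\Lambda'$; and let the radical powers act on the right, $S\supseteq S\rad(\Lambda')\supseteq S\rad(\Lambda')^2\supseteq\cdots$, these being left $D$-subspaces because the $D$- and $\Lambda'$-actions on $S$ commute. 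With these transpositions your argument is correct and coincides with the paper's proof.
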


\begin{proof}
Since $E$ is semiprimary, $M_E$ has finite Loewy length, and hence we may assume that $M_E$ is semisimple. Then $M_\Lambda$ Artinian or Noetherian implies the same for $M_E$, and hence we may assume that $M_E$ is simple. Note that this already proves the result when $\Lambda=E$; that is, $M_E$ is Noetherian if and only if it is Artinian.

After taking the quotient by the annihilator of $M$, we may assume that $E=\bM_n(\Delta)$ for a division ring $\Delta$, and hence regard $M$ as a $\Delta$-$E$-bimodule. We claim that, in this situation, $\Lambda$ is semiprimary, so $M_\Lambda$ has finite length as above.

Since $E_E\cong M_E^n$, we know that $E_\Lambda$, and hence also $\Lambda_\Lambda$, is Artinian or Noetherian. If $\Lambda$ is right Artinian, then it is semiprimary, and we are done. Assume therefore that $E_\Lambda$ is right Noetherian.

Suppose $x\in\Lambda$ has inverse $y\in E$. For some $d$ we have $y^d\in\sum_{i<d}y^i\Lambda$, and thus $y\in\sum_{i<d}x^{d-i-1}\Lambda\subset\Lambda$. Since ${}_\Delta M$ has finite length, it follows from the theorem that $\Lambda$ is semilocal. Also, $E\cdot\rad^n(\Lambda)$ form a descending chain of left $\Delta$-modules, so stabilises. Nakayama's Lemma then gives $E\cdot\rad^n(\Lambda)=0$ for some $n$. Thus $\rad^n(\Lambda)=0$, and $\Lambda$ is semiprimary as claimed.
\end{proof}

The next corollary appears as Theorem 3.11 in \cite{Bjork}.
\begin{Cor}
Let $E$ be semiprimary and $\Lambda\subset E$ a subring. If $E_\Lambda$ is Noetherian, then $\Lambda$ is right Artinian.
\end{Cor}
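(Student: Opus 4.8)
The plan is to reduce everything to the preceding Proposition by choosing the right module to feed into it. Take $M$ to be the regular right module $E_E$. Since $E$ is semiprimary and $\Lambda\subset E$ is a subring, the hypotheses of the Proposition are satisfied with this choice of $M$, and the module in question is $E_\Lambda$, the restriction of $E_E$ along the inclusion $\Lambda\hookrightarrow E$.

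First I would observe that the hypothesis $E_\Lambda$ Noetherian is exactly the condition ``$M_\Lambda$ Noetherian'' appearing in the Proposition. Applying the equivalence given there, I conclude immediately that $E_\Lambda$ is also Artinian. This is the only real input; the rest is a standard submodule argument.

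Next I would note that $\Lambda$ sits inside $E$ not merely as a subset but as a right $\Lambda$-submodule: because $\Lambda\cdot\Lambda\subseteq\Lambda$, the right $\Lambda$-action on $E$ restricts to the right regular action on $\Lambda$, so $\Lambda_\Lambda\leq E_\Lambda$. Since any submodule of an Artinian module is Artinian, and $E_\Lambda$ has just been shown Artinian, it follows that $\Lambda_\Lambda$ is Artinian, i.e.\ $\Lambda$ is right Artinian, as required.

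I do not expect a genuine obstacle here: the corollary is essentially the Proposition specialised to $M=E_E$, combined with the triviality that $\Lambda$ is a $\Lambda$-submodule of $E$. The only point meriting a moment's care is making explicit that restricting the regular module $E_E$ along $\Lambda\hookrightarrow E$ yields a right $\Lambda$-module containing $\Lambda_\Lambda$ as a submodule, so that Artinianness passes down from $E_\Lambda$ to $\Lambda$ itself.
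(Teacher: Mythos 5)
Your proof is correct and is essentially identical to the paper's, which simply says ``Apply the proposition to $M=E$''; you have just spelled out the implicit final step that $\Lambda_\Lambda\leq E_\Lambda$ is a submodule, so Artinianness passes down to $\Lambda$.
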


\begin{proof}
Apply the proposition to $M=E$.
\end{proof}

\begin{Cor}[Lenagan,Crawley-Boevey]
Let ${}_\Gamma M_\Lambda$ be a bimodule such that ${}_\Gamma M$ is Artinian and $M_\Lambda$ is Noetherian. Then ${}_\Gamma M$ and $M_\Lambda$ both have finite length.
\end{Cor}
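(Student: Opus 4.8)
The plan is to reduce the final corollary to the Proposition by producing a subring of a semiprimary endomorphism ring over which $M$ is the module in question. Since ${}_\Gamma M$ is Artinian, let me set $E := \End({}_\Gamma M)$, the endomorphism ring of $M$ as a left $\Gamma$-module. The right action of $\Lambda$ on $M$ commutes with the left $\Gamma$-action, so each $\lambda\in\Lambda$ acts as a left $\Gamma$-endomorphism of $M$; this gives a ring homomorphism $\Lambda\to E$. Replacing $\Lambda$ by its image (the kernel acts trivially on $M$ and so does not affect the module structure of $M_\Lambda$, nor its chain conditions), I may assume $\Lambda\subset E$ is a subring.

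Next I would identify which of the earlier results applies. The first Theorem tells us that $E$ itself is semiprimary: indeed $M$ is an Artinian, hence finite length, object, so its endomorphism ring is semiprimary — this is the well-known fact quoted in the introduction, and can also be recovered by applying the Theorem with $\Lambda = E$ (noting $E^\times = E\cap E^\times$ trivially). With $E$ semiprimary and $\Lambda\subset E$, I am now exactly in the setting of the Proposition, viewing $M$ as a right $E$-module. The hypothesis says $M_\Lambda$ is Noetherian, so the Proposition gives that $M_\Lambda$ is also Artinian. A module that is both Noetherian and Artinian has finite length, so $M_\Lambda$ has finite length, which is the first conclusion.

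The second conclusion, that ${}_\Gamma M$ has finite length, then follows quickly: since $M_\Lambda$ has finite length, it is in particular Noetherian as a right $\Lambda$-module, and we already know ${}_\Gamma M$ is Artinian. By the left-right symmetry of the whole setup — applying the same argument with the roles of the two sides interchanged, now using that ${}_\Gamma M$ is Artinian and $M_\Lambda$ is finite length hence Noetherian — one obtains that ${}_\Gamma M$ is Noetherian as well, and an Artinian Noetherian module has finite length. Alternatively, and more cleanly, once $M_\Lambda$ has finite length it has a finite filtration by simple right $\Lambda$-modules; each composition factor is annihilated by a maximal right ideal, and one can bound the left $\Gamma$-length in terms of this filtration since ${}_\Gamma M$ is already Artinian.

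The main obstacle is the very first step: verifying that the reduction to $\Lambda\subset E$ with $E = \End({}_\Gamma M)$ is legitimate and that the chain conditions transport correctly. Specifically, I must check that the right $\Lambda$-submodule lattice of $M$ coincides with the lattice of $E$-submodules stable under the image of $\Lambda$, so that ``$M_\Lambda$ Noetherian'' in the Proposition's sense matches the hypothesis; and that passing to the image of $\Lambda$ in $E$ does not alter the relevant finiteness properties. Once this bookkeeping is settled, everything else is a direct invocation of the Theorem (to get $E$ semiprimary) and the Proposition (to promote Noetherian to finite length).
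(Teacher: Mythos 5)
Your overall plan---reduce to the Proposition via the ring map $\Lambda\to E:=\End_\Gamma(M)$, get $M_\Lambda$ of finite length, then transfer to ${}_\Gamma M$ via $\End_\Lambda(M)$---is exactly the paper's strategy, but the pivotal step, that $E$ is semiprimary, is where your argument breaks down, and it breaks down on the point that is the whole difficulty of the theorem. You justify it by saying $M$ is ``Artinian, hence finite length'': this is both false in general and circular here, since finite length of ${}_\Gamma M$ is precisely half of the conclusion being proven. Your fallback, applying the Theorem with $\Lambda=E$, only yields that $E$ is \emph{semilocal}, not semiprimary: the Theorem says nothing about nilpotency of the radical. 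The gap is genuine and cannot be patched using the Artinian hypothesis alone: for the Pr\"ufer group $M=\mathbb{Z}(p^\infty)$ over $\Gamma=\mathbb{Z}$, which is Artinian but not of finite length, $\End_\Gamma(M)$ is the ring of $p$-adic integers, a local ring whose radical $p\mathbb{Z}_p$ is not nilpotent, so not semiprimary.

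The paper closes this gap by using \emph{both} chain conditions to prove that $\rad(E)$ is nilpotent. The Theorem gives that $E$ is semilocal. The submodules $M\cdot\rad^n(E)$ of ${}_\Gamma M$ form a descending chain, which stabilises because ${}_\Gamma M$ is Artinian. Every right $E$-submodule of $M$ is a $\Lambda$-submodule (since $\Lambda$ acts through $E$), so $M_\Lambda$ Noetherian forces $M_E$ Noetherian; Nakayama's Lemma applied to the finitely generated $E$-module $N=M\cdot\rad^n(E)$, which satisfies $N=N\cdot\rad(E)$ by stabilisation, gives $N=0$, and faithfulness of the $E$-action on $M$ then gives $\rad^n(E)=0$. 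Once this is in place, the rest of your argument is correct and matches the paper: the Proposition gives $M_\Lambda$ Artinian, hence of finite length; then $\End_\Lambda(M)$ is semiprimary (now legitimately, as $M_\Lambda$ honestly has finite length), and a second application of the Proposition to the image of $\Gamma$ in $\End_\Lambda(M)$ shows ${}_\Gamma M$ is Noetherian, hence of finite length. (Your ``alternative'' closing argument via composition factors of $M_\Lambda$ does not work as stated---the terms of a $\Lambda$-composition series are not $\Gamma$-submodules---but it is not needed.)
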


\begin{proof}
It is enough to prove that $E:=\End_\Gamma(M)$ is semiprimary, since then $M_\Lambda$ has finite length by the proposition, so $\End_\Lambda(M)$ is also semiprimary, and hence ${}_\Gamma M$ has finite length by the proposition once more.

Now, the theorem tells us that that $E$ is semilocal. Also, the $\Gamma$-sub\-modules $M\cdot\rad^n(E)$ form a descending chain, so must stabilise. As $M_\Lambda$ is Noetherian, so too is $M_E$. Thus Nakayama's Lemma gives $M\cdot\rad^n(E)=0$ for some $n$, and hence that $\rad^n(E)=0$.
\end{proof}

\end{document}